\numberwithin{equation}{section} 
\newtheorem{thm}{Theorem}[section]
\newtheorem{lemma}[thm]{Lemma}
\newtheorem{pro}[thm]{Proposition}
\newcommand{\N}{\mathbb{N}}
\begin{document}
\baselineskip 14pt
\title{Exact Diophantine approximation$\colon$ the simultaneous case in $\mathbb{R}^{2}$}
\author{Bo Tan and Qing-Long Zhou}
\address{ School  of  Mathematics  and  Statistics,
                Huazhong  University  of Science  and  Technology, 430074 Wuhan, PR China}
          \email{tanbo@hust.edu.cn}
\address{School  of  Mathematics  and  Statistics, Wuhan University of Technology, 430070 Wuhan, PR China }
\email{zhouql@whut.edu.cn}
\keywords{Hausdorff dimension,  Diophantine approximation.}
\subjclass[2010]{Primary 28A80; Secondary 11K55, 11J83}

\date{}

\begin{abstract}
We fill a gap in the study of the Hausdorff dimension of the set 
of exact approximation order  considered  by Fregoli [Proc. Amer. Math. Soc. 152 (2024), no. 8, 3177--3182].
\end{abstract}

\maketitle

\section{Introduction}
Let $n$ be a positive integer and let $\psi\colon \N\to [0,\frac{1}{2})$ be  a  non-negative function, 
the $\psi$-simultaneously-well approximable set $W(n,\psi)$ is defined to be 
$$W(n,\psi):=\Big\{(x_1,\ldots,x_n)\in [0,1]^n\colon \max_{1\le i\le n}|\!|qx_i|\!|<\psi(q) \text{ for infinitely many } q\in \N \Big\},$$
where $|\!|x|\!|:=\min\{|x-m|\colon m\in \mathbb{Z}\}$ denotes the distance from $x\in \mathbb{R}$
to the nearest integer. Assuming the monotonicity of $\psi,$ the classical Khintchine’s theorem \cite{K24} states that the Lebesgue measure $\mathcal{L}(W(1,\psi))=0$ or 1 according as the series 
$\sum_q\psi(q)$ converges or diverges.  Duffin-Schaeffer \cite{DS41} proved that Khintchine's theorem generally fails without the monotonicity condition on $\psi$. More precisely, they constructed a function $\psi$ which is supported on a set of very smooth integers (having a large number of small prime factors), such that $\sum_q\psi(q)$ diverges, but  $W(1,\psi)$ is null. 
Further,  consider the set
$$ {W}^{\ast}(1, \psi):=\Big\{x\in [0,1]\colon  |\!|qx-\gamma|\!|^\ast<\psi(q)\text{ for infinitely many } q\in \N\Big\},$$
where $$|\!|qx-\gamma|\!|^\ast:=\min_{\text{gcd}(p,q)=1}|qx-p|.$$
{\bf{Duffin-Schaeffer conjecture}} claimed that$\colon$ for any $\psi\colon \N\to [0,\frac{1}{2}),$ 
\begin{equation*}
\mathcal{L}(W^{\ast}(1,\psi))=\begin{cases}
   0   & \text{if $\sum_{q=1}^{\infty}\frac{\phi(q)}{q}\psi(q)<\infty$}, \\
   ~&\\
    1  & \text{if $\sum_{q=1}^{\infty}\frac{\phi(q)}{q}\psi(q)=\infty$},
\end{cases}
\end{equation*}
where $\phi$ is the Euler's totient function.
This conjecture animated a great deal of research until it was finally proved in a breakthrough of Koukoulopoulos-Maynard \cite{KM20}.

Jarn\'{i}k Theorem \cite{J31} shows, under the monotonicity of $\psi,$  that
$$\dim_{\rm H}W(1,\psi)=\frac{2}{\lambda+1}, \ \ \text{where } \lambda:=\lim_{q\to\infty}\frac{-\log \psi(q)}{\log q}.\footnote{Jarn\'{i}k Theorem still holds with `$\lim$' replaced by `$\liminf$'. }$$
It is worth mentioning that Jarn\'{i}k Theorem can be deduced by combining Khintchine's Theorem and the mass transference principle of Beresnevich-Velani \cite{BV06}.  For a general function $\psi,$ the Hausdorff dimension of the set $W(1,\psi)$ was studied extensively by Hinokuma-Shiga \cite{HS96}.

For $n\ge 2,$ Gallagher \cite{G62} proved that$\colon$   for any $\psi\colon \N\to [0,\frac{1}{2}),$ 
\begin{equation*}
\mathcal{L}(W(n,\psi))=\begin{cases}
   0   & \text{if $\sum_{q=1}^{\infty}(\psi(q))^{n}<\infty$}, \\
   ~&\\
    1  & \text{if $\sum_{q=1}^{\infty}(\psi(q))^{n}=\infty$}.
\end{cases}
\end{equation*}
Rynne \cite{R98} considered the mutli-dimensional generalization of $W(1,\psi),$ and proved that 
\begin{equation}\label{W1}
\dim_{\rm H}W(n,\psi)=\frac{n+1}{\lambda+1}.
\end{equation}

We now turn to discuss the Hausdorff dimension of sets in exact Diophantine approximation. 
The exact  $\psi$-simultaneously-well approximable set $\mathrm{Exact}(n,\psi)$ is defined to be 
\begin{equation*}
\mathrm{Exact}(n,\psi):=W(n,\psi) \backslash \bigcup_{0<c<1}W(n,c\psi).
\end{equation*}
Assume that $q\psi(q)$ is non-increasing and tends to zero at infinity. Bugeaud \cite{B03,B03+} and then Bugeaud and Moreira \cite{BM11} showed that 
\begin{equation}\label{E1}
\dim_{\rm H}\mathrm{Exact}(1,\psi)=\dim_{\rm H}W(1,\psi)=\frac{2}{\lambda+1}.
\end{equation}
Recently,  Fregoli \cite{F24} computed the Hausdorff dimension of $\mathrm{Exact}(n,\psi)$ for $n\ge 3$. The technique used by Fregoli took (\ref{E1}) and lifted it to higher dimensions by a clever observation$\colon$ for ${\bf{x}}=(x_1,\ldots,x_n)\in \mathbb{R}^{n},$  
$$\text{if} ~ x_1\in \mathrm{Exact}(1,\psi) ~\text{and} ~ (x_2,\ldots,x_n)\in W(n-1,\psi), \text{then}~ {\bf{x}}\in \mathrm{Exact}(n,\psi). $$
The dimension result $\frac{n+1}{\lambda+1}$ can then be proven via a Theorem of Jarn\'{i}k on fibers. In this paper, we would like to fill the gap$\colon$ $n=2.$
\begin{thm}\label{thm1}
Assume that $q\psi(q)$ is non-increasing and tends to zero at infinity. Then 
$$\dim_{\rm{H}}\mathrm{Exact}(2,\psi)=\frac{3}{\lambda+1}, \ \ \text{where } \lambda:=\lim_{q\to\infty}\frac{-\log \psi(q)}{\log q}.$$
\end{thm}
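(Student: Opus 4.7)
\emph{Upper bound.} Since $\mathrm{Exact}(2,\psi)\subseteq W(2,\psi)$, Rynne's formula \eqref{W1} immediately gives $\dim_H\mathrm{Exact}(2,\psi)\le \tfrac{3}{\lambda+1}$.

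\emph{Lower bound.} The plan is to follow the fibred strategy of Fregoli, adapted to $n=2$. The elementary observation is that if $x_1\in\mathrm{Exact}(1,\psi)$, then for every $x_2\in[0,1]$ the clause ``$(x_1,x_2)\notin W(2,c\psi)$ for all $c<1$'' is automatic: indeed $\max(\|qx_1\|,\|qx_2\|)\ge\|qx_1\|\ge c\psi(q)$ for all but finitely many $q$. Setting
$$Q_{x_1}:=\{q\in\N:\|qx_1\|<\psi(q)\},\qquad F(x_1):=\bigl\{x_2\in[0,1]:\|qx_2\|<\psi(q)\text{ for infinitely many }q\in Q_{x_1}\bigr\},$$
one has the inclusion
$$\mathrm{Exact}(2,\psi)\;\supseteq\;\bigcup_{x_1\in\mathrm{Exact}(1,\psi)}\{x_1\}\times F(x_1).$$
The dimension of this fibred union will be bounded below by Marstrand's slicing theorem (Mattila, \emph{Geometry of sets and measures in Euclidean spaces}, Theorem~10.10), after producing a Cantor subset $A\subseteq\mathrm{Exact}(1,\psi)$ with $\dim_H A=\tfrac{2}{\lambda+1}$ (a Bugeaud--Moreira-type Cantor associated with \eqref{E1}) supporting a probability measure $\mu$ of the same Hausdorff dimension, such that $\dim_H F(x_1)\ge\tfrac{1}{\lambda+1}$ for $\mu$-almost every $x_1\in A$. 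Slicing then yields $\dim_H\mathrm{Exact}(2,\psi)\ge\tfrac{2}{\lambda+1}+\tfrac{1}{\lambda+1}=\tfrac{3}{\lambda+1}$.

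For the fibre dimension, a covering estimate shows that $\sum_{q\in Q_{x_1}}q^{1-s(\lambda+1)}$ is finite for every $s>\tfrac{1}{\lambda+1}$ whenever $Q_{x_1}$ is lacunary, whence $\dim_H F(x_1)\le\tfrac{1}{\lambda+1}$. The matching lower bound is obtained by the standard Cantor subset of $\bigcap_k\{x_2:\|q_k x_2\|<\psi(q_k)\}$ along a prescribed subsequence $(q_k)\subseteq Q_{x_1}$: direct calculation shows that the Cantor dimension tends to $\tfrac{1}{\lambda+1}$ as the ratio $\log q_{k+1}/\log q_k$ tends to infinity, and a mass-distribution principle (together with a limiting argument in this sparsity parameter) delivers the lower bound.

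\emph{Main obstacle.} The delicate part is producing $A$ with the property that, along every branch $x_1\in A$, the good-denominator set $Q_{x_1}$ contains a subsequence $(q_k)$ with $\log q_{k+1}/\log q_k$ arbitrarily large, \emph{while} the global exact condition $\|qx_1\|\ge c\psi(q)$ (for every $c<1$ and all but finitely many $q\in\N$, not just $q$ in the chosen subsequence) continues to hold. Coordinating these two requirements inside a single Bugeaud--Moreira-type Cantor construction -- so that the first-coordinate Cantor has the right dimension $\tfrac{2}{\lambda+1}$, and simultaneously the transverse one-dimensional Jarn\'{\i}k--Besicovich set $F(x_1)$ is large for $\mu$-a.e.\ branch -- is the core technical point that distinguishes $n=2$ from the cases $n\ge 3$ treated by Fregoli, where the abundance of $(x_2,\ldots,x_n)\in W(n-1,\psi)$ supplied by Gallagher's theorem together with Rynne's formula removes the need to build the transverse fibre by hand.
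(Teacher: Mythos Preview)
Your overall architecture matches the paper's exactly: the upper bound via $\mathrm{Exact}(2,\psi)\subseteq W(2,\psi)$ and Rynne, and the lower bound via Marstrand's slicing over the base $\mathrm{Exact}(1,\psi)$ with fibres $F(x_1)$ (the paper's $W(x_1,1,\psi)$ is your $F(x_1)$).

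Where you diverge is in the fibre lower bound $\dim_H F(x_1)\ge\tfrac{1}{\lambda+1}$. The paper handles this in one stroke via the Mass Transference Principle in its Duffin--Schaeffer form (resting on Koukoulopoulos--Maynard): restrict $\psi$ to $Q_{x_1}$ to get $\Psi$, check that $\sum_{q\in Q_{x_1}} (\psi(q)/q)^s\phi(q)\gg\sum_{q\in Q_{x_1}} q^{\varepsilon}/\log q=\infty$ for $s$ just below $\tfrac{1}{\lambda+1}$ (using only that $Q_{x_1}$ is infinite together with Mertens' estimate $\phi(q)/q\gg 1/\log q$), and conclude $\mathcal{H}^s(W^*(1,\Psi))=\infty$.

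Your direct Cantor-set route is more elementary and would also work, but the ``Main obstacle'' you identify is a phantom. You do not need to engineer a special $A$ so that $Q_{x_1}$ admits sparse subsequences: \emph{every} infinite subset of $\N$ admits, for each $M$, a subsequence with $q_{k+1}\ge q_k^M$, simply by greedy selection. Since $Q_{x_1}$ is infinite for every $x_1\in W(1,\psi)\supseteq\mathrm{Exact}(1,\psi)$, the Jarn\'{i}k-type Cantor set inside $\bigcap_k\{x_2:\|q_kx_2\|<\psi(q_k)\}$ already gives $\dim_H F(x_1)\ge\tfrac{1}{\lambda+1}$ for \emph{every} such $x_1$, with no coordination whatsoever with the Bugeaud--Moreira construction needed. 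Likewise, the Frostman measure $\mu$ on $A$ is unnecessary: the slicing lemma in the form the paper uses (Falconer) requires only a base set of Hausdorff dimension $\tfrac{2}{\lambda+1}$ whose every fibre has dimension at least $\tfrac{1}{\lambda+1}$, and $\mathrm{Exact}(1,\psi)$ itself serves as that base by \eqref{E1}. In short, your plan is already complete once the nonexistent obstacle is dropped; the paper's MTP argument simply replaces the Cantor bookkeeping by an appeal to a deeper theorem.
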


Applying the celebrated \emph{Dani correspondence} and the \emph{parametric geometry of numbers} developed by Schmidt-Summerer \cite{SS09}, Roy \cite{R15}, and in particular to the remarkable preprint of Das-Fishman-Simmons-Urba\'{n}ski \cite{DFSU24},  Bandi-De Saxc\'{e} \cite{BS23} computed the Hausdorff dimension of $\mathrm{Exact}(n,\psi)$ for $n\ge 1.$ We only wish to point out that our proof is significantly different.

\section{Proof of Theorem \ref{thm1}}
Before proceeding, we cite two famous results$\colon$ Marstrands' Slicing Lemma and Mass Transference Principle (MTP).

\begin{lemma}[Marstrands' Slicing Lemma, \cite{F14}]\label{MSC}
Let $X, Y$ be two metric space and let $E\subset X\times Y.$ If there is a subset $X_1\subset X$ of Hausdorff dimension $s$ such that for any $x_1\in X_1,$ $\dim_{\rm{H}}\{x_2\in Y\colon (x_1,x_2)\in E\}\ge t,$ then 
$$\dim_{\rm{H}}E\ge s+t.$$ 
\end{lemma}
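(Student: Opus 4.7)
The plan is a standard Frostman measure plus mass distribution construction. Fix $s' < s$ and $t' < t$; it is enough to produce a nonzero Borel measure $\lambda$ supported in $E$ satisfying $\lambda(B_r) \lesssim r^{s'+t'}$ on every ball, for then the mass distribution principle forces $\dim_{\rm H} E \ge s' + t'$, and letting $s' \uparrow s$, $t' \uparrow t$ finishes the proof.

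First I would invoke Frostman's lemma on $X_1$: since $\dim_{\rm H} X_1 \ge s > s'$, there is a nonzero finite Borel measure $\mu$ supported on a compact subset of $X_1$ with $\mu(B_X(x,r)) \le C_\mu r^{s'}$ for all balls. Next, for each $x_1 \in X_1$, Frostman's lemma applied to the slice $E_{x_1}$ (whose dimension is at least $t > t'$) yields a nonzero measure $\nu_{x_1}$ on $E_{x_1}$ with $\nu_{x_1}(B_Y(y,r)) \le C(x_1)\, r^{t'}$. I would then integrate these against $\mu$ to form
\[
\lambda(A) := \int \nu_{x_1}(A_{x_1}) \, d\mu(x_1),
\]
which is a Borel measure supported in $E$. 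Equipping $X \times Y$ with the max product metric so that $B((x_1,x_2), r) = B_X(x_1, r) \times B_Y(x_2, r)$, the desired ball estimate
\[
\lambda\bigl(B((x_1,x_2), r)\bigr) \le \mu\bigl(B_X(x_1, r)\bigr) \cdot \sup_{x_1' \in B_X(x_1, r)} \nu_{x_1'}\bigl(B_Y(x_2, r)\bigr) \le C_\mu C_0 \, r^{s'+t'}
\]
follows at once from the two Frostman bounds.

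The main obstacle is arranging the second step so that the assignment $x_1 \mapsto \nu_{x_1}$ is Borel measurable and the Frostman constant $C(x_1)$ is uniformly bounded by some $C_0$. I would address measurability via the Kuratowski--Ryll-Nardzewski measurable selection theorem applied to the set-valued map sending $x_1$ to the space of admissible Frostman measures on $E_{x_1}$ (topologized by weak-$\ast$ convergence on a suitable compact set), and address uniformity by exhausting $X_1$ through the sublevel sets $\{x_1 \colon C(x_1) \le n\}$ and restricting $\mu$ to one of these on which $\mu$ retains positive mass, the existence of such an $n$ following from countable subadditivity; the restricted measure still obeys a Frostman bound of the same exponent $s'$. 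Apart from these standard measure-theoretic technicalities, the argument is mechanical.
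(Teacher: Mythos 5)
The paper offers no proof of this lemma at all---it is quoted verbatim from Falconer's book---so the only comparison available is with the standard argument given there, which is a covering argument. One fixes $s'<s$ and $t'<t$, places a Frostman measure $\mu$ on the \emph{base} $X_1$ only, and for an arbitrary $\delta$-cover $\{U_i\}$ of $E$ observes that the slices of the $U_i$ through a point $x_1$ cover the fiber $E_{x_1}$, whence $\sum_{i\colon x_1\in\pi_X(U_i)}|U_i|^{t'}\ge\mathcal{H}^{t'}_{\delta}(E_{x_1})\ge 1$ for all $x_1$ in a sub-level set of positive $\mu$-measure (uniformity in $\delta$ being arranged by countable subadditivity exactly as in your last step). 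Integrating this inequality in $x_1$ against $\mu$ and using $\mu(\pi_X(U_i))\le C|U_i|^{s'}$ gives $\sum_i|U_i|^{s'+t'}\ge C^{-1}\mu(X_1^{(m)})>0$, hence $\mathcal{H}^{s'+t'}(E)>0$. No measure is ever placed on a fiber, so no selection problem arises and the fibers need no regularity whatsoever.

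Your route instead hinges on choosing a Frostman measure $\nu_{x_1}$ on \emph{every} fiber simultaneously and measurably, and this is precisely where the proposal has a genuine gap rather than a ``standard measure-theoretic technicality.'' Kuratowski--Ryll-Nardzewski does not apply as invoked: the set of admissible Frostman measures supported on $E_{x_1}$ is not weak-$*$ closed unless the fiber is closed (the support constraint is destroyed by weak-$*$ limits for a general Borel, let alone arbitrary, fiber), and weak measurability of the multifunction would require $\{x_1\colon F(x_1)\cap U\ne\emptyset\}$ to be measurable, which is a projection and hence a priori only analytic. Repairing this needs the Jankov--von Neumann uniformization theorem, universally measurable selections, and the hypothesis that $E$ is at least an analytic subset of a Polish product---none of which appears in the statement, which allows arbitrary $E$ in arbitrary metric spaces. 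The same unresolved measurability infects your uniformization of the constants (the sets $\{x_1\colon C(x_1)\le n\}$ and the lower bound on $\nu_{x_1}(Y)$ needed to guarantee $\lambda\ne 0$ are defined through the selection you have not constructed). The Frostman-measure-on-the-base half of your argument is sound and is also the engine of the cited proof; the fiber half should be replaced by the covering estimate above, which is exactly how the textbook proof avoids the difficulty.
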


\begin{lemma}[MTP, \cite{BV06}]\label{MTP}
Let $f$ be a dimension function such that $x^{-1}f(x)$ is monotonic. Then
$$\mathcal{H}^{f}(W^{\ast}(1,\psi))=\mathcal{H}^{f}([0,1])
\text{ if } \sum_{q\in \N} f(\psi(n)/n)\phi(n)=+\infty.$$
\end{lemma}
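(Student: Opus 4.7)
The upper bound $\dim_{\rm H}\mathrm{Exact}(2,\psi)\le 3/(\lambda+1)$ is immediate from the inclusion $\mathrm{Exact}(2,\psi)\subset W(2,\psi)$ and Rynne's dimension formula (\ref{W1}). The substantive content is the matching lower bound, which I plan to obtain by combining Marstrand's Slicing Lemma with a fibrewise application of the Mass Transference Principle. The strategy is parallel in spirit to Fregoli's lifting trick, but I must replace the external fibre $W(n-1,\psi)$ (which is not compatible with the same sequence of denominators as $x_1$) by a refined, $x_1$-dependent fibre built from the good denominators of $x_1$ itself.

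Fix $x_1\in\mathrm{Exact}(1,\psi)$ and let $Q(x_1):=\{q\in\N:\|qx_1\|<\psi(q)\}$, which is infinite because $x_1\in W(1,\psi)$. Define the fibre
$$F(x_1):=\big\{x_2\in[0,1]:\|qx_2\|<\psi(q)\text{ for infinitely many }q\in Q(x_1)\big\}.$$
I first verify that $\{x_1\}\times F(x_1)\subset\mathrm{Exact}(2,\psi)$. For any $x_2\in F(x_1)$, the infinitely many $q\in Q(x_1)$ witnessing membership satisfy $\max(\|qx_1\|,\|qx_2\|)<\psi(q)$, so $(x_1,x_2)\in W(2,\psi)$. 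Moreover, since $x_1\notin W(1,c\psi)$ for any $c<1$, the inequality $\|qx_1\|\ge c\psi(q)$ holds for all but finitely many $q$; hence $\max(\|qx_1\|,\|qx_2\|)\ge c\psi(q)$ eventually, proving $(x_1,x_2)\notin W(2,c\psi)$.

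The main technical step is to show $\dim_{\rm H}F(x_1)\ge 1/(\lambda+1)$ uniformly for every $x_1\in\mathrm{Exact}(1,\psi)$. Realise $F(x_1)$ as a limsup set by introducing the pruned function $\tilde\psi(q):=\psi(q)\mathbbm{1}_{q\in Q(x_1)}$, so that $W^{*}(1,\tilde\psi)\subset F(x_1)$. For any $s<1/(\lambda+1)$, apply Lemma \ref{MTP} with the dimension function $f(x)=x^{s}$, which reduces matters to the divergence
$$\sum_{q\in Q(x_1)}\bigl(\psi(q)/q\bigr)^{s}\phi(q)=+\infty.$$
Using the classical lower bound $\phi(q)\gg q/\log\log q$ and the estimate $\psi(q)\ge q^{-\lambda-\epsilon}$ guaranteed by the definition of $\lambda$, each summand is of order at least $q^{1-s(\lambda+1+\epsilon)}/\log\log q$. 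Choosing $\epsilon$ small enough so that $s(\lambda+1+\epsilon)<1$ makes the summand tend to infinity, forcing divergence along any infinite set $Q(x_1)$. Hence $\mathcal{H}^{s}(W^{*}(1,\tilde\psi))=\infty$, so $\dim_{\rm H}F(x_1)\ge s$; letting $s\nearrow 1/(\lambda+1)$ proves the claim.

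Finally, Lemma \ref{MSC} applied with $X_1=\mathrm{Exact}(1,\psi)$ (of dimension $2/(\lambda+1)$ by (\ref{E1})) and uniform fibre dimension $1/(\lambda+1)$ yields
$$\dim_{\rm H}\mathrm{Exact}(2,\psi)\ge\frac{2}{\lambda+1}+\frac{1}{\lambda+1}=\frac{3}{\lambda+1},$$
completing the argument. The delicate point I anticipate is the fibre step: one must verify that the pruned $\tilde\psi$ is admissible for Lemma \ref{MTP} (it still takes values in $[0,1/2)$) and, more importantly, that the divergence of the weighted sum persists for \emph{every} $x_1\in\mathrm{Exact}(1,\psi)$, regardless of how sparse $Q(x_1)$ may be. The reason the argument survives this is quantitative: with $s(\lambda+1)<1$ and a small $\epsilon$, the individual summand $q^{1-s(\lambda+1+\epsilon)}/\log\log q$ blows up, converting the mere infiniteness of $Q(x_1)$ into the quantitative divergence demanded by the Mass Transference Principle.
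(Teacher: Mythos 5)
Your proposal does not prove the statement at issue. The statement is the Mass Transference Principle (Lemma~\ref{MTP}), a theorem of Beresnevich and Velani which the paper itself does not prove but imports verbatim from \cite{BV06}; establishing it requires a genuinely different set of ideas (the construction of a Cantor-type subset of the limsup set supporting a measure to which the mass distribution principle applies, after ``transferring'' the full-measure statement of the Duffin--Schaeffer type to the shrunken balls). What you have written is instead a proof of Theorem~\ref{thm1} in which Lemma~\ref{MTP} is invoked as a black box: nowhere do you argue that divergence of $\sum_{q} f(\psi(q)/q)\phi(q)$ forces $\mathcal{H}^{f}(W^{\ast}(1,\psi))$ to be full; you only verify that divergence hypothesis for a particular pruned approximation function and then cite the lemma. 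As a proof of the lemma this is circular, and no amount of tightening the slicing or totient estimates will repair it.

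For what it is worth, as an argument for Theorem~\ref{thm1} your proposal coincides with the paper's own: the same slicing via Lemma~\ref{MSC} over $X_1=\mathrm{Exact}(1,\psi)$, the same pruned function $\Psi$ equal to $\psi$ on $\mathcal{Q}(x_1)$ and $0$ elsewhere, the same inclusion $W^{\ast}(1,\Psi)\subset W(x_1,1,\psi)$, and the same application of the MTP with $f(x)=x^{s}$ for $s<\frac{1}{\lambda+1}$. The only cosmetic difference is the bound on the totient: you use $\phi(q)\gg q/\log\log q$ where the paper uses the weaker $\phi(q)/q\gg 1/\log q$ from Mertens' theorem; both suffice, since the point in either case is that the individual summands tend to infinity, so the series diverges over an arbitrary infinite set $\mathcal{Q}(x_1)$. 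If your assignment was the main theorem, this is fine; if it was Lemma~\ref{MTP}, you still owe the actual proof, which lives in \cite{BV06}.
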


First, we note that $\mathrm{Exact}(2,\psi)\subset \mathrm W(1,\psi)\times[0,1].$ For $x_1\in \mathrm{Exact}(1,\psi)\subset W(1,\psi),$ set 
$$W(x_1,1,\psi):=\{x_2\in[0,1]\colon (x_1,x_2)\in\mathrm{Exact}(2,\psi)\}.$$
Then Marstrands' Slicing Lemma \ref{MSC} yields that 
$$\dim_{\rm{H}} \mathrm{Exact}(2,\psi)\ge\dim_{\rm{H}}\mathrm{Exact}(1,\psi)+\inf_{x_1\in \mathrm{Exact}(1,\psi)}\dim_{\rm{H}}W(x_1,1,\psi).$$

\begin{pro}\label{low bound}
For $x_1\in\mathrm{Exact}(1,\psi),$ we have $\dim_{\rm{H}}W(x_1,1,\psi)\ge\frac{1}{\lambda+1}.$
\end{pro}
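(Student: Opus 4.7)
The plan is to reduce $W(x_1,1,\psi)$ to a one-dimensional limsup set and then apply the mass transference principle (Lemma \ref{MTP}) at exponents just below $\frac{1}{\lambda+1}$.

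The first step is to observe that once $x_1\in\mathrm{Exact}(1,\psi)$ is fixed, the exact-approximation condition on the pair $(x_1,x_2)$ is automatic: $(x_1,x_2)\in W(2,c\psi)$ would force $\|qx_1\|<c\psi(q)$ for infinitely many $q$, i.e., $x_1\in W(1,c\psi)$, which by hypothesis fails for every $c<1$. Consequently
$$W(x_1,1,\psi)=\bigl\{x_2\in[0,1]\colon \|qx_2\|<\psi(q)\text{ for infinitely many }q\in Q_{x_1}\bigr\},$$
where $Q_{x_1}:=\{q\in\N\colon \|qx_1\|<\psi(q)\}$ is infinite because $x_1\in W(1,\psi)$.

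Next, introduce the truncated function $\tilde\psi\colon\N\to[0,\tfrac12)$ with $\tilde\psi(q)=\psi(q)$ for $q\in Q_{x_1}$ and $\tilde\psi(q)=0$ otherwise. Since $|qx|^{\ast}\ge\|qx\|$, we have $W^{\ast}(1,\tilde\psi)\subseteq W(1,\tilde\psi)$, and by construction $W(1,\tilde\psi)=W(x_1,1,\psi)$. For any $t\in(0,\tfrac{1}{\lambda+1})$, I would apply Lemma \ref{MTP} with the dimension function $f(r)=r^{t}$ (so $r^{-1}f(r)=r^{t-1}$ is monotonic). The divergence hypothesis then reads
$$\sum_{q\in Q_{x_1}}\phi(q)\bigl(\psi(q)/q\bigr)^{t}=+\infty.$$
From the definition of $\lambda$, for every $\epsilon>0$ one has $\psi(q)\ge q^{-\lambda-\epsilon}$ for all sufficiently large $q$; combined with the classical bound $\phi(q)\gg q/\log\log q$, each summand is bounded below by a constant times $q^{1-t(\lambda+1+\epsilon)}/\log\log q$. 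Choosing $\epsilon$ small enough that $1-t(\lambda+1+\epsilon)>0$, the summands tend to $+\infty$ along $q\in Q_{x_1}$, so the series diverges. Lemma \ref{MTP} then gives $\mathcal{H}^{t}(W^{\ast}(1,\tilde\psi))=\mathcal{H}^{t}([0,1])=+\infty$, hence $\dim_{\rm H}W(x_1,1,\psi)\ge t$; letting $t\uparrow\frac{1}{\lambda+1}$ completes the lower bound.

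The subtle point is that no quantitative control on the density of $Q_{x_1}$ is available — these denominators could grow extremely fast, and at the critical exponent $\frac{1}{\lambda+1}$ the MTP series might well converge. The resolution is precisely to work at strictly subcritical exponents $t<\frac{1}{\lambda+1}$, where each individual summand already blows up, making divergence automatic regardless of the sparseness of $Q_{x_1}$.
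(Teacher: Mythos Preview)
Your argument is correct and follows essentially the same route as the paper: restrict $\psi$ to the set $Q_{x_1}$ of good denominators for $x_1$, embed $W^{\ast}(1,\tilde\psi)$ into $W(x_1,1,\psi)$, and verify the MTP divergence condition at any subcritical exponent by noting that the individual summands already tend to infinity. The only cosmetic differences are that the paper parametrises the exponent as $s=\frac{1-\varepsilon}{\lambda+1}$ and uses the cruder Mertens bound $\phi(q)/q\gg 1/\log q$ in place of your $\phi(q)\gg q/\log\log q$.
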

\begin{proof}
Let $\mathcal{Q}(x_1)=\{q\in \N\colon |\!|qx_1|\!|<\psi(q)\}.$ Then $\sharp\mathcal{Q}(x_1)=+\infty.$  Put 
\begin{equation*}
\Psi(q)=\begin{cases}
   \psi(q)   & \text{if  $q\in \mathcal{Q}(x_1)$}, \\
    0  & \text{otherwise}.
\end{cases}
\end{equation*}
Set 
$$W^{\ast}(1,\Psi):=\{x\in[0,1]\colon |\!|qx|\!|^{\ast}<\Psi(q) \text{ for infinitely many } q\in\N\}.$$
It is readily checked that $W^{\ast}(1,\Psi)$ is a subset of $W(x_1,1,\psi).$

For $\varepsilon>0,$ put $s=\frac{1-\varepsilon}{\lambda+1}$. We deduce  that
\begin{align*}
\sum_{q\in \N}\Big(\frac{\Psi(q)}{q}\Big)^{s}\phi(q)=\sum_{q\in\mathcal{Q}(x_1)}\Big(\frac{\psi(q)}{q}\Big)^{s}\phi(q)\ge \sum_{q\in\mathcal{Q}(x_1)}q^{-(1-\varepsilon)}\phi(q)\gg\sum_{q\in\mathcal{Q}(x_1)}\frac{q^{\varepsilon}}{\log q}=+\infty,
\end{align*}
where the first inequality holds by the definition of 
$\lambda;$ the second one follows by the following fact$\colon$
$$\frac{\phi(q)}{q}=\prod\limits_{\substack{p| q, \\ p \text{ is prime number}}}\Big(1-\frac{1}{p}\Big)\ge \prod\limits_{p\le q}\Big(1-\frac{1}{p}\Big)\gg \frac{1}{\log q}\Big(1+O(\frac{1}{\log q})\Big),$$
where the last inequality holds by Mertens' theorem\footnote{Mertens' theorem$\colon$ $\prod\limits_{p\le q}(1-\frac{1}{p})=\frac{e^{-\gamma}}{\log q}(1+O(\frac{1}{\log q})),$ where $\gamma$ is the Euler's constant.}.

By Lemma \ref{MTP}, we obtain 
$$\dim_{\rm{H}}W^{\ast}(1,\Psi)\ge \frac{1}{\lambda+1}.$$
Thus $\dim_{\rm{H}}W(x_1,1,\psi)\ge \frac{1}{\lambda+1}.$
\end{proof}

By (\ref{E1}) and Proposition \ref{low bound}, we get 
$$\dim_{\rm{H}}\mathrm{Exact}(2,\psi)\ge \frac{3}{\lambda+1}.$$
On the other hand, $W(2,\psi)$ is a trivial  covering of $\mathrm{Exact}(2,\psi),$
we have 
$$\dim_{\rm{H}}\mathrm{Exact}(2,\psi)\le \dim_{\rm{H}}W(2,\psi)\overset{\text{(\ref{W1})}}{=}\frac{3}{\lambda+1},$$
as desired.

\subsection*{Acknowledgements} 
This work was supported by NSFC Nos. 12171172, 12201476.

\end{document}